\newcommand\norm[1]{\left\lVert#1\right\rVert}
\newcommand\abs[1]{\left\lvert#1\right\rvert}
\newtheorem{theorem}{Theorem}
\newtheorem{lemma}{Lemma}
\title{Efficient Evaluation of Ellipsoidal Harmonics for Potential Modeling}
\author{Thomas S. Klotz \and Jaydeep P. Bardhan \and Matthew G. Knepley}
\address[Matthew Knepley]{Department of Computational and Applied Mathematics\\
        Rice University\\
        Houston, TX 77005}
\email{knepley@rice.edu}
\address[Thomas Klotz]{Department of Computational and Applied Mathematics\\
		Rice University\\
		Houston, TX 77005}
\email{tsk1@rice.edu}
\address[Jaydeep Bardhan]{Department of Mechanical Engineering\\
		Northeastern University\\
		Boston, MA 02115}
\email{jbardhan@neu.edu}
\thanks{Research supported in part by NSF grant SI2-SSI: 1450339 and DOE Contract DE-AC02-06CH11357.}
\subjclass{Primary 65D32, 33C50; Secondary 78M16, 65Y20}
\begin{document}

\begin{abstract}
Ellipsoidal harmonics are a useful generalization of spherical harmonics but present additional numerical challenges. One such challenge is in computing ellipsoidal normalization constants which require approximating a singular integral. In this paper, we present results for approximating normalization constants using a well-known decomposition and applying tanh-sinh quadrature to the resulting integrals. Tanh-sinh has been shown to be an effective quadrature scheme for a certain subset of singular integrands. To support our numerical results, we prove that the decomposed integrands lie in the space of functions where tanh-sinh is optimal and compare our results to a variety of similar change-of-variable quadratures.
\end{abstract}

\maketitle


\section{Introduction}

Ellipsoidal harmonics are eigenfunctions of the Laplacian in ellipsoidal
coordinates~\cite{Byerly1893,Hobson1931,Dassios2012}, and are thus related to the spherical harmonics which are the
eigenfunctions for spherical coordinates. In fact, in ellipsoidal coordinates, the Laplace equation is separable into
three identical equations, the only coordinate system for which this is true~\cite{Miller1977}. Expansions in
ellipsoidal harmonics are suited for problems in potential theory with distributions or boundaries that are
well-modeled by ellipsoids. Unfortunately, ellipsoidal harmonics can be difficult to use, compared to their
spherical counterparts. Some of these difficulties include a lack of closed-form solutions for high-order harmonics,
normalization constants which are defined by a singular integral, multi-valued coordinate transforms, and other
implementation-specific details. For example, all computations in ellipsoidal coordinates depend on the problem-specific
choice of semi-axis lengths and must be performed again for each choice of semi-axes.

Despite computational challenges, ellipsoidal harmonics provide many computational benefits. First, if the region of
interest is better described by an ellipsoid, say a non-spherical comet~\cite{Romain2001}, then the expansion will
converge in areas much closer to the region of interest than will spherical harmonics. Thus in our example, we can
calculate the gravitational field much closer to our comet. Second, even in volumes for which both expansion converge,
the rate of convergence, as a function of order of the expansion, can be higher for ellipsoidal harmonics.

The numerics of expansion in ellipsoidal harmonics have been examined in prior work~\cite{BardhanKnepley2012a}. However,
calculation of the normalization constants for the expansion was both an accuracy and efficiency bottleneck. In this
paper, we present a method for calculating the singular integrals which defines the normalization constants with tanh-sinh
quadrature. This method is particularly suitable for high-order ellipsoidal harmonics since accuracy is only limited by
the precision of interior harmonic evaluations. In Section 2, we review the ellipsoidal coordinate system, definition of
the harmonics, and define a problem in electrostatics which we will use as an illustration. In Section 3, we define
tanh-sinh quadrature, and prove its optimality for calculation of the normalization constants. In Section 4, we detail
our software implementation, and in Section 5 we demonstrate its effectiveness for some sample electrostatic problems.

\section{Background}

This section provides the background necessary for our implementation of ellipsoidal harmonics, but further detail on
ellipsoidal coordinates and harmonic solutions as they are used in this paper are available
in~\cite{BardhanKnepley2012a} and~\cite{Klotz2017}. Readers interested in further theory on ellipsoidal harmonics are
encouraged to consult the book of Dassios~\cite{Dassios2012} which provides a clear and comprehensive presentation.

\subsection{Ellipsoidal Coordinates}
The ellipsoidal coordinate system is a 3-dimensional generalization of the familiar 2-dimensional elliptic coordinate
system. In these coordinates, the Laplacian is fully separable. Moreover, because ellipsoidal coordinates are defined
with respect to a reference ellipsoid, it depends on the semi-axes $a$, $b$, and $c$ for definition. 

In an ellipsoidal system defined with respect to a reference ellipsoid with semi-axis lengths $a$, $b$, and $c$ given by
\begin{align}
	\frac{x^2}{a^2} + \frac{y^2}{b^2} + \frac{z^2}{c^2} = 1,
\end{align}
an ellipsoidal point $(\lambda, \mu, \nu)$ corresponding to Cartesian point $(x,y,z)$ is given by the roots of
\begin{align}\label{eq:ell2}
	\frac{x^2}{s^2} + \frac{y^2}{s^2-h^2} + \frac{z^2}{s^2-k^2} = 1
\end{align}
where $h^2 = a^2-b^2$ and $k^2 = a^2-c^2$. The squares of the roots of Eq.~\eqref{eq:ell2} are contained in the
intervals
\begin{align}
	\lambda^2 &\in (k^2,\infty) \\
	\mu^2     &\in (h^2, k^2) \\
	\nu^2     &\in (0, h^2),
\end{align}
with the sign convention
\begin{align}\label{eq:signambig}
	\text{sgn}_{\lambda} &= \text{sgn}_{x} \,\text{sgn}_{y}\, \text{sgn}_{z} \nonumber\\
	\text{sgn}_{\mu}    &= \text{sgn}_{x}\, \text{sgn}_{y} \\
	\text{sgn}_{\nu}    &= \text{sgn}_{x}\, \text{sgn}_{z}. \nonumber
\end{align}
Note that the inverse sign relation from ellipsoidal to Cartesian coordinates is given by swapping $x$ for $\lambda$,
$y$ for $\mu$, and $z$ for $\nu$ in Eq.~\ref{eq:signambig}.

\subsection{Ellipsoidal Harmonics}

Analogous to spherical harmonics, interior ellipsoidal harmonics form a basis for square-integrable functions inside the
reference ellipsoid. The Laplace equation is fully separable in ellipsoidal coordinates. If we assume a separable
solution and plug that into the Laplacian, the one-dimensional functions in each variable satisfy Lam\'e's equation
\begin{align}\label{eq:lame}
  (s^2 - h^2)(s^2 - k^2)\frac{d^2E}{ds^2}(s) + s (2 s^2 - h^2 - k^2)\frac{dE}{ds}(s) + (p - qs^2)E(s) = 0
\end{align}
where $p$ and $q$ are constants. Solutions to Lam\'e's equation fall into two classes: interior harmonics and exterior
harmonics. These solutions are used to represent functions regular on the interior and exterior of the ellipsoid
respectively. For a given order $n$, there are $2n+1$ interior harmonics each written as $E_n^p$ for $p \in [0, 2n+1)$.
These $2n+1$ solutions can be further divided into four sub-classes, $K$, $L$, $M$, and $N$ to assist in
calculation. For brevity, we will provide a brief overview of how solutions in class $K$ are derived. Further details on
the other classes are available in~\cite{BardhanKnepley2012a}.

We assume that a given $E_n^p$ in solution class $K$ can be written in the form
\begin{align}\label{eq:knpexp}
	E_n^p(s) = s^{n-2r} \sum_{j=0}^r b_j (1-\frac{s^2}{h^2})^j
\end{align}
where $r = \lfloor\frac{n}{2}\rfloor$. After plugging Eq.~\eqref{eq:knpexp} into Eq.~\eqref{eq:lame}, further
manipulation shows that the coefficients $b_j$ are given as an eigenvector of a tridiagonal matrix, which depends on the
solution class. Once these $b_j$ coefficients have been determined, the harmonic $E_n^p$ can be evaluated for different
values of $s$ using Eq.~\eqref{eq:knpexp}.

While the interior harmonics are unbounded as $\lambda \to \infty$, the exterior harmonics, $F_n^p$, are constructed
such that $F_n^p(\lambda) \to 0$ as $\lambda \to \infty$. They are defined as
\begin{align}
  F_n^p(\lambda) = (2n+1) E_n^p(\lambda) I_n^p(\lambda)
\end{align}
where
\begin{align}
  I_n^p(\lambda) = \int_{\lambda}^{\infty} \frac{ds}{[E_n^p(s)]^2 \sqrt{s^2-k^2} \sqrt{s^2-h^2}}.
\end{align}
Notice that as $\lambda \to \infty$, $I_n^p \to 0$ faster than $E_n^p \to \infty$. Thus, $F_n^p$ is guaranteed to decay
towards 0 at infinity.

The normalization constants associated with ellipsoidal harmonics are defined by the surface integral
\begin{align}
  \gamma_n^p = \int \int_{\lambda=a} (E_n^p(\mu)E_n^p(\nu))^2 \, ds
\end{align}
which can be expressed in terms of $\mu$ and $\nu$ as
\begin{align}\label{eq:normconstant}
	\int_0^h \int_h^k (E_n^p(\mu)E_n^p(\nu))^2 \frac{\mu^2-\nu^2}
	{\sqrt{(\mu^2-h^2)(k^2-\mu^2)}\sqrt{(h^2-\nu^2)(k^2-\nu^2)}}
	\, d\mu \, d\nu
\end{align}
In order to compute this integral numerically, we use the following well-known decomposition into four one-dimensional
integrals,
\begin{align}
  \gamma_n^p = 8(\mathcal{I}_1\mathcal{I}_2 - \mathcal{I}_3\mathcal{I}_4)
\end{align}
where
\begin{equation}\label{eq:normdecomp}
	\begin{aligned}
		\mathcal{I}_1 = \int_0^h \frac{(E_n^p(\nu))^2}{\sqrt{h^2-\nu^2}\sqrt{k^2-\nu^2}}\,d\nu
		\quad \quad \mathcal{I}_2 = 
		\int_h^k \frac{\mu^2(E_n^p(\mu))^2}{\sqrt{\mu^2-h^2}\sqrt{k^2-\mu^2}}\,d\mu
		\\
		\mathcal{I}_3 = 
		\int_0^h \frac{\nu^2(E_n^p(\nu))^2}{\sqrt{h^2-\nu^2}\sqrt{k^2-\nu^2}}\,d\nu
		\quad \quad
		\mathcal{I}_4 = 
		\int_h^k \frac{(E_n^p(\mu))^2}{\sqrt{\mu^2-h^2}\sqrt{k^2-\mu^2}}\, d\mu
	\end{aligned}
\end{equation}
We will directly compute these one-dimensional integrals using tanh-sinh quadrature.

\subsection{Ellipsoidal Mixed Dielectric Model}

In order to test the accuracy and efficiency of our method, we will use an ellipsoidal multipole expansion arising from
the  polarizable continuum model (PCM) of bioelectrostatics~\cite{Bardhan12_review}. To allow for an explicit
representation of the potential through an ellipsoidal harmonic expansion, we assume that the solvent is homogeneous,
infinite, and isotropic, and also that the molecular cavity is ellipsoidal.

\begin{figure}[ht]
\begin{center}
  \includegraphics{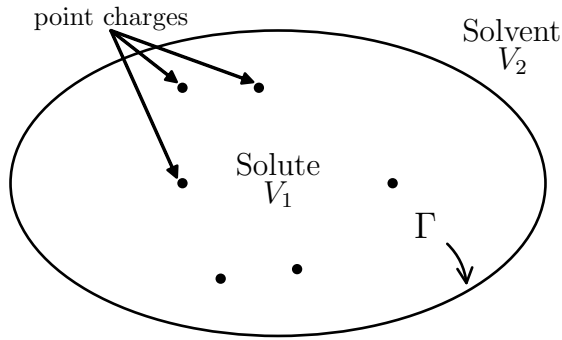}
  \caption{Mixed Dielectric Poisson Model \label{fig:ellmodel}}
\end{center}
\end{figure}
A diagram representing the model is shown in Fig.~\ref{fig:ellmodel}. The ellipsoidal boundary
$\Gamma$ separates solute cavity $V_1$ from the solvent region $V_2$. The electric permitivities, $\epsilon_1$ and
$\epsilon_2$, are assumed to be constant inside and outside of the molecular cavity and thus discontinuous at boundary
$\Gamma$. $V_1$ is also assumed to contain $Q$ point charges each at position $r_k$ with charge magnitude $q_k$ for
$k=1,\dots,Q$.

Allowing $\Phi_1$ and $\Phi_2$ to represent the electric potentials in regions $V_1$ and $V_2$ respectively, we arrive
at the following PDE:
\begin{align}
  \Delta \Phi_1(r) &= \sum_{k=1}^Q q_k \delta(r-r_k) \\
  \Delta \Phi_2(r) &= 0
\end{align}
The boundary conditions relating $\Phi_1$ and $\Phi_2$ at $\Gamma$ are the standard Maxwell boundary conditions:
continuity of potential and electric displacement. For $r_s \in \Gamma$,
\begin{align}
  \Phi_1(r_s) &= \Phi_2(r_2) \label{eq:bc1}\\
  \epsilon_1 \frac{\partial \Phi_1}{\partial n}(r_s) &= \epsilon_2 \frac{\partial \Phi_2}{\partial n}(r_s). \label{eq:bc2}
\end{align}
Finally, the electrostatic free energy due to solvation, $\Delta G^{\text{el}}_{\text{solv}}$ can be calculated directly
from the interior solution for $\Phi_1$,
\begin{equation}
  \Delta G^{\text{el}}_{\text{solv}} = \frac{1}{2} \sum_{k=1}^Q q_k \Phi_1(r_k).
\end{equation}

The physical interpretation of this model is that the solute charge distribution interacts with the dielectric medium by
inducing an apparent surface charge with density $\sigma(r)$ on the boundary of the molecular cavity. Common PCM formulations obtain
a solution for $\sigma(r)$ by discretizing over an equivalent boundary integral equation and solving the resulting
system numerically. The polarization field can then be calculated directly from $\sigma(r)$. The model above avoids
direct discretization of $\sigma(r)$ by assuming an ellipsoidal cavity, so that the polarization field can be calculated
using an expansion in ellipsoidal harmonics.

\subsection{Ellipsoidal Multipole Solution}

In order to derive an explicit multipole solution to our mixed-dielectric jump problem, we begin by assuming that the
solution on the interior can be decomposed into a sum of the Coulomb potential plus the reaction potential due to the
induced boundary charge distribution.
\begin{align}
  \Phi_1(r) = \psi^{\text{Coul}}(r) + \psi^{\text{reac}}(r)
\end{align}
We can use the well-known exterior expansion for the Green's function
\begin{align}
	\frac{1}{\norm{r-r'}} =
	\sum_{n=0}^{\infty} \sum_{p=1}^{2n+1} \frac{4\pi}{2n+1} \frac{1}{\gamma_n^p} 
	\mathbb{E}_n^p(r') \mathbb{F}_n^p(r) \label{eq:greensell}
\end{align}
which results in the following expansion for Coulomb potential valid for $\lambda \geq a$ (exterior and boundary points)
\begin{align}
	\psi^{\text{Coul}}(r) = \sum_{n=0}^{\infty} \sum_{p=1}^{2n+1} \frac{G_n^p}{\epsilon_1}\mathbb{F}_n^p(r)
\end{align}
where
\begin{align}
	G_n^p = \sum_{k=1}^Q q_k \frac{4\pi}{2n+1} \frac{1}{\gamma_n^p} \mathbb{E}_n^p (r_k).
\end{align}
If we then assume that $\psi^{\text{reac}}$ and $\Phi_2(r)$ can be expanded in terms of interior and exterior harmonics
\begin{align}
	\psi^{\text{reac}}(r) &= \sum_{n=0}^{\infty} \sum_{p=1}^{2n+1} B_n^p \mathbb{E}_n^p(r)	\label{psi} \\
	\Phi_2(r) &= \sum_{n=0}^{\infty} \sum_{p=1}^{2n+1} C_n^p \mathbb{F}_n^p(r) \label{phi2}
\end{align}
and then demand that $\Phi_1(r)$ and $\Phi_2(r)$ satisfy the boundary conditions in Eq.~\eqref{eq:bc1} and Eq.~\eqref{eq:bc2},
and the Green's function expansion in Eq.~\eqref{eq:greensell} which is valid at the boundary, recognizing that the
equation for each $n$ is independent, we arrive at
\begin{align}\label{eq:bnp}
  B_n^p = \frac{\epsilon_1 - \epsilon_2}{\epsilon_1\epsilon_2} \frac{F_n^p(a)}{E_n^p(a)} \left(1 - \frac{\epsilon_1}{\epsilon_2} \frac{\tilde{E}_n^p(a)}{\tilde{F}_n^p(a)} \right)^{-1} G_n^p
\end{align}
where
\begin{align}
  \tilde{E}_n^p (\lambda) = \frac{1}{E_n^p(\lambda)} \frac{\partial E_n^p(\lambda)}{\partial \lambda}
\end{align}
and $\tilde{F}_n^p$ is defined analogously.

\section{Tanh-sinh Quadrature}

Tanh-sinh quadrature is a numerical integration scheme for one-dimensional integrands on finite intervals and was
specifically designed to handle endpoint singularities~\cite{TakahasiMori1974,Mori2005}. Tanh-sinh works by first
transforming an integrand on $(-1,1)$ to $(-\infty, \infty)$ and then applying the uniform trapezoid rule to the
resulting integrand. The change of variables for $t \in (-\infty,\infty)$, is given by
\begin{align}\label{eq:tanhsinhtrans}
  \psi(t) = \tanh(\frac{\pi}{2}\sinh t), \quad \psi'(t) =\frac{\frac{\pi}{2}\cosh t}{\cosh^2(\frac{\pi}{2}\sinh t)}.
\end{align}
Applying the truncated trapezoid rule with step size $h$ then gives the formula for tanh-sinh:
\begin{gather}
  \int_{-1}^{1} f(x)\,dx \approx \sum_{k=-N}^N f(x_{kh}) w_{kh} \\
  x_{kh} = \tanh(\frac{\pi}{2} \sinh kh), \quad w_{kh} = \frac{\frac{\pi}{2} h \cosh kh}{\cosh^2 (\frac{\pi}{2}\sinh kh)}.
\end{gather}

The efficiency of this formula is a result of two properties: the decay rate of the resulting integrand after
transformation and the asymptotic optimality of the trapezoid rule for analytic integrals over $\mathbb{R}$. Due to the known
optimality of the trapezoid formula for analytic integrals over the entire real line, the optimality of tanh-sinh and other
similar quadratures depends on the efficiency of the change of variables. We begin by introducing a few function spaces
commonly used in analysis of such quadratures.

\subsection{Tanh-sinh Optimality}
First we introduce $\mathcal{D}_d$ to be a strip of finite width around the real axis in the complex plane defined by
\begin{equation}
	\phantom{.}
	\mathcal{D}_d = \{z \in \mathbb{C} \mid \abs{\Im(z)} < d\}
	.
\end{equation}
We have the following theorem credited to Tanaka et al. which provides a condition to tell whether an integrand under the tanh-sinh transformation falls into the function space characterized by Sugihara~\cite{Sugihara1997} in which tanh-sinh is optimal~\cite{Tanaka2008}.
\begin{theorem}
	Assume that $f$ is analytic in $\psi(\mathcal{D}_d)$ for some $d \in (0,\pi/2)$ and suppose there exists some $C_1>0$ and $\beta>0$ such that $\forall z \in \psi(\mathcal{D}_d)$,
	\begin{equation}
		 \abs{f(z)} \leq C_1 \abs{(1-z^2)^{\beta/2-1}}. \label{eq:TSbound}
	\end{equation}
	Then there exists some C independent from N such that
	\begin{equation}
		\abs{ \int_0^1 f(x)\,dx - h\sum_{k=-N}^N f(\psi(kh)) \psi'(kh) }
		\leq
		C \exp\left(-\frac{2\pi dN}{\log(8dN/\beta)}\right)
	\end{equation}
	where
	\begin{equation}
	h = \frac{\log(8dN/\beta)}{N}
	\end{equation}
	\label{thm:tanhsinhspace}
\end{theorem}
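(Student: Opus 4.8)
The plan is to reduce the statement to Sugihara's optimality theorem for the trapezoidal rule on an analytic strip by showing that the transformed integrand lands exactly in the function class for which that rule is optimal. Introduce $g(t) = f(\psi(t))\,\psi'(t)$, so that under the substitution $x = \psi(t)$ the exact integral equals $\int_{-\infty}^{\infty} g(t)\,dt$ and the right-hand sum is precisely its trapezoidal approximation with step $h$, truncated to the $2N+1$ nodes $kh$. The problem then becomes twofold: (i) $g$ is analytic on the strip $\mathcal{D}_d$, and (ii) $g$ obeys the double-exponential decay estimate required to place it in Sugihara's space $\mathbf{H}^\infty(\mathcal{D}_d, w)$. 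Claim (i) is immediate: by hypothesis $f$ is analytic on $\psi(\mathcal{D}_d)$, while $\psi$ and $\psi'$ are analytic on $\mathcal{D}_d$ since the restriction $d<\pi/2$ keeps $\tfrac{\pi}{2}\sinh t$ away from the poles of $\tanh$ (the nearest is reached only at $t=\pm i\pi/2$), so the composition $g$ is analytic there.

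The heart of the argument is the decay estimate (ii), and the useful observation is that the hypothesis bound is tailored precisely to the tanh-sinh map. Using the identity $1-\psi(t)^2 = \cosh^{-2}(\tfrac{\pi}{2}\sinh t)$ together with the explicit $\psi'$ from Eq.~\eqref{eq:tanhsinhtrans}, the assumed bound $\abs{f(z)} \le C_1\abs{(1-z^2)^{\beta/2-1}}$ evaluated at $z=\psi(t)$ collapses, after the powers of $\cosh(\tfrac{\pi}{2}\sinh t)$ combine, to
\begin{equation}
  \abs{g(t)} \le C_1\,\frac{\pi}{2}\,\abs{\cosh t}\,\abs{\cosh(\tfrac{\pi}{2}\sinh t)}^{-\beta}.
\end{equation}
On the real axis, since $\cosh(\tfrac{\pi}{2}\sinh t) \sim \tfrac12\exp(\tfrac{\pi}{4}e^{\abs t})$ as $\abs{t}\to\infty$, this yields double-exponential decay $\abs{g(t)} \lesssim \exp(-\tfrac{\pi\beta}{4}e^{\abs t})$, which is exactly the weight $w$ defining Sugihara's optimal class.

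The main obstacle is upgrading this decay from the real line to the full strip $\mathcal{D}_d$, since Sugihara's theorem requires the estimate uniformly for $\abs{\Im t}<d$. For complex $t=\sigma+i\tau$ with $\abs{\tau}<d<\pi/2$, the argument $\tfrac{\pi}{2}\sinh t$ has a growing real part and a bounded imaginary part, and one must verify that $\abs{\cosh(\tfrac{\pi}{2}\sinh t)}$ still grows double-exponentially while $\psi(t)$ stays inside $\psi(\mathcal{D}_d)$ and away from the branch points $z=\pm1$ of $(1-z^2)^{\beta/2-1}$; this is precisely where $d<\pi/2$ is used, and it is the step that demands genuine care rather than a routine estimate.

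Granting the uniform bound, $g\in\mathbf{H}^\infty(\mathcal{D}_d,w)$ with $w(t)\asymp\exp(-\tfrac{\pi\beta}{4}e^{\abs t})$, and Sugihara's error estimate applies. Its error splits into a discretization part controlled by the strip width, of order $\exp(-2\pi d/h)$, and a truncation part from terminating the sum at $\pm N$, of order $w(Nh)\asymp\exp(-\tfrac{\pi\beta}{4}e^{Nh})$. Balancing these contributions to leading order produces the stated step size: with $h=\log(8dN/\beta)/N$ one has $Nh=\log(8dN/\beta)$, so $e^{Nh}=8dN/\beta$ makes the truncation exponent $\tfrac{\pi\beta}{4}e^{Nh}=2\pi dN$, which exceeds the discretization exponent $\tfrac{2\pi d}{h}=2\pi dN/\log(8dN/\beta)$. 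The truncation error is therefore dominated by the discretization error, and substituting $h$ into $\exp(-2\pi d/h)$ gives the claimed bound $C\exp(-2\pi dN/\log(8dN/\beta))$.
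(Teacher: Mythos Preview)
The paper does not actually prove this theorem: it is stated as a result of Tanaka et al.\ (with reference to Sugihara's characterization of the optimal function space) and is then used as a black box in the analysis of $\mathcal{I}_1$ and in Theorem~2. So there is no ``paper's own proof'' to compare against; the paper's contribution is to verify the hypothesis~\eqref{eq:TSbound} for the normalization-constant integrands, not to rederive the error bound itself.

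That said, your sketch is the standard route to this result and is essentially how Tanaka et al.\ proceed: pull the integral back by $x=\psi(t)$, use $1-\psi(t)^2=\cosh^{-2}(\tfrac{\pi}{2}\sinh t)$ so that the growth hypothesis on $f$ collapses exactly to a double-exponential decay bound on $g(t)=f(\psi(t))\psi'(t)$, place $g$ in Sugihara's class $\mathbf{H}^\infty(\mathcal{D}_d,w)$, and balance the discretization term $\exp(-2\pi d/h)$ against the truncation term to obtain the stated $h$ and error. Your identification of the one nontrivial step---making the decay estimate uniform on the full strip $\abs{\Im t}<d$, which is where the restriction $d<\pi/2$ enters---is accurate, and your balancing computation is correct. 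One small point: the theorem as stated in the paper writes $\int_0^1$, but the tanh-sinh map $\psi$ carries $\mathbb{R}$ onto $(-1,1)$, so your reduction implicitly (and correctly) treats the interval as $(-1,1)$; this appears to be a typographical slip in the statement rather than an issue with your argument.
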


While it is relatively straightforward check whether many integrands are bounded by $\abs{(1-z^2)^{\beta/2-1}}$ on $(-1,1)$,
it is less straightforward to show this bound holds true on $\psi(\mathcal{D}_d)$ in Eq.~\eqref{eq:TSbound}. In
Fig.~\ref{fig:D_TS}, we show a contour plot of $\psi(\mathcal{D}_d)$ for $d=\pi/4$.
\begin{figure}
	\begin{center}
	\includegraphics{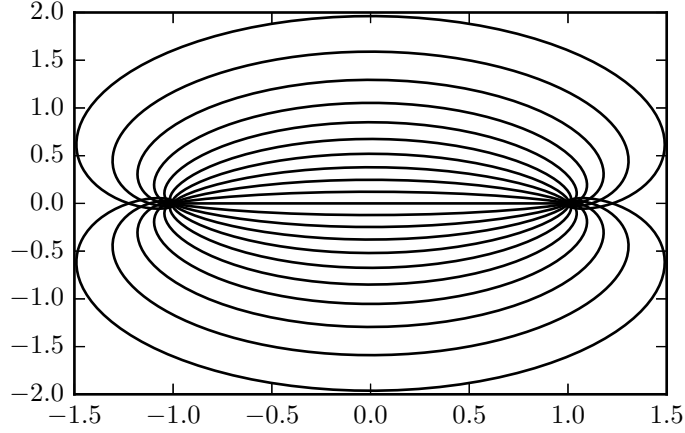}
	\end{center}
    \caption{Contours of $\psi(\mathcal{D}_d)$. Each contour corresponds to a horizontal line in $\mathcal{D}_d$ (imaginary part held constant). \label{fig:D_TS}}
\end{figure}
In the following subsection, we prove that one of the normalization constant integrands lies in this space, before
generalizing our proof in the next subsection.

\subsection{The integral $\mathcal{I}_1$}

The intuition behind this proof is to show that the height $d$ of the complex strip $D_d$ can be chosen to be small enough such that all singularities outside the region of integration are excluded from the closure of $\psi(\mathcal{D}_d)$. This guarantees that the non-singular terms are continuous and thus bounded on $\psi(\mathcal{D}_d)$, giving us the necessary conditions for Theorem~\eqref{thm:tanhsinhspace}.

We start by looking at $\mathcal{I}_1$
\begin{equation}
	\mathcal{I}_1 = \int_0^h \frac{(E_n^p(\nu))^2}{\sqrt{h^2-\nu^2}\sqrt{k^2-\nu^2}}\,d\nu
\end{equation}
and transform this integral onto $(-1,1)$ using the change of variables $z = \frac{2}{h}\nu - 1$ resulting in
\begin{equation}
	\phantom{.}\mathcal{I}_1 =
	\int_{-1}^{1}  \frac{E_n^p(\frac{h}{2}(z+1))}
	{\sqrt{1-\frac{(z+1)^2}{4}}\sqrt{k^2-\frac{h^2(z+1)^2}{4}}}
	 \frac{h}{2}\, dz.
\end{equation}
The transformed integrand is continuous on $\mathbb{C}$ with the exception of $z=1$, $z=-3$, $z=\frac{2k}{h}-1$, and $z=-\frac{2k}{h}-1$. Thus, we want to choose $d$ such that the latter three singularities are excluded from $\overline{\psi(\mathcal{D}_d)}$ so that the conditions for Theorem~\eqref{thm:tanhsinhspace} are met.

Because $\psi$ is Lipschitz for $d<\frac{\pi}{2}$, showing that the latter three singularities are excluded from $\overline{\psi(\mathcal{D}_d)}$ can be done by showing that the points in $\mathbb{C}$ which map to singularities under $\psi$ must be bounded away from the real axis, and thus can be excluded from $\mathcal{D}_d$. The following describes how this can be done by contradiction.

If we take $\psi$ to be the tanh-sinh transformation and look at the set of points mapping to the latter three singularities,
\begin{equation}
	S = \left\{ z\in\mathbb{C} \mid \psi(z) = -3 \text{ or } \psi(z) = \frac{2k}{h}-1 \text{ or } \psi(z) = -\frac{2k}{h}-1 \right\},
\end{equation}
then we can show through contradiction that these must be bounded away from the real axis. Assuming for contradiction that they aren't bounded away from the real axis, there exists a sequence of points $(x_j)_{j=1}^\infty$ where each $x_j \in S$ and $|x_j - \Re(x_j)| \to 0$ as $j \to \infty$. However, because $\psi$ is Lipschitz, we know that $|\psi(x_j) - \psi(\Re(x_j))| \to 0$ as $j \to \infty$ as well. However, this is a contradiction because each $\psi(\Re(x_j)) \in (-1,1)$ and $\psi(x_j)$ is a singular point of the integrand, implying that there is a sequence of singular points getting arbitrarily close to $(-1,1)$ which is impossible with only three singular points. Therefore, if we choose $d = \inf(\Im(S))/2$, then $d>0$ and by our choice of $d$, $\overline{\mathcal{D}_d} \cap S = \emptyset$.

Because $\overline{\mathcal{D}_d} \cap S = \emptyset$, the non-singular portion of our integrand contains no singularities and must be continuous on $\overline{\psi(\mathcal{D}_d)}$. Finally, because $\overline{\psi(\mathcal{D}_d)}$ is compact, this portion of our integrand is bounded by a constant resulting in the necessary conditions for Theorem~\eqref{thm:tanhsinhspace}.

Notice that this result doesn't rely on any specific properties of the non-singular portion of the integrand other than assuming it is analytic with a finite number of complex singularities. Thus, we can generalize this proof to functions of this type as is demonstrating in the following section.

\subsection{General proof}
For the following section,
\begin{equation}
	\psi(z) = \tanh\left(\frac{\pi}{2}\sinh(z)\right)
\end{equation}
\begin{lemma}
	$\psi(z)$ is Lipschitz on $\mathcal{D}_d$ whenever $d < \frac{\pi}{2}$
\end{lemma}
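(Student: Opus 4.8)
The plan is to establish the Lipschitz property by bounding the complex derivative
\[
  \psi'(z) = \frac{\frac{\pi}{2}\cosh z}{\cosh^2\!\left(\frac{\pi}{2}\sinh z\right)}
\]
uniformly on $\mathcal{D}_d$ and then invoking convexity of the strip. First I would verify that $\psi$ is holomorphic on $\mathcal{D}_d$ by locating the poles of $\psi'$: these occur precisely where $\cosh(\frac{\pi}{2}\sinh z) = 0$, i.e.\ where $\sinh z \in i(2\mathbb{Z}+1)$. Writing $z = x+iy$ and using $\sinh z = \sinh x \cos y + i\cosh x \sin y$, the condition $\Re(\sinh z)=0$ forces $\sinh x = 0$ (since $\cos y > 0$ for $\abs{y} < \pi/2$), hence $x = 0$, and then $\sin y = \pm 1$, i.e.\ $y = \pm\pi/2$. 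Thus no singularity lies in $\overline{\mathcal{D}_d}$ whenever $d < \pi/2$, and this is exactly where the hypothesis $d<\pi/2$ enters.

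Next I would bound $\abs{\psi'}$. Using the identity $\abs{\cosh(u+iv)}^2 = \sinh^2 u + \cos^2 v$ with $u = \frac{\pi}{2}\sinh x \cos y$ and $v = \frac{\pi}{2}\cosh x \sin y$, I obtain $\abs{\cosh z} \le \cosh x$ for the numerator and the lower bound $\abs{\cosh(\frac{\pi}{2}\sinh z)}^2 \ge \sinh^2(\frac{\pi}{2}\sinh x \cos y)$ for the denominator, giving
\[
  \abs{\psi'(z)} \le \frac{\frac{\pi}{2}\cosh x}{\sinh^2\!\left(\frac{\pi}{2}\sinh x \cos y\right)}.
\]
Since $\abs{y} \le d < \pi/2$ implies $\cos y \ge \cos d > 0$, the denominator grows double-exponentially in $\abs{x}$ while the numerator grows only exponentially, so $\abs{\psi'(z)} \to 0$ uniformly in $y$ as $\abs{x} \to \infty$.

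Boundedness then follows by splitting the strip: on the compact set $\{z \in \overline{\mathcal{D}_d} : \abs{\Re z} \le R\}$, continuity of $\psi'$ (guaranteed by the absence of poles established above) yields a bound, and for $\abs{\Re z} > R$ with $R$ taken large the decay estimate supplies a bound; hence $\abs{\psi'} \le M$ on all of $\overline{\mathcal{D}_d} \supseteq \mathcal{D}_d$. Finally, because $\mathcal{D}_d$ is convex, any $z_1, z_2 \in \mathcal{D}_d$ are joined by a segment $[z_2,z_1]$ lying in the strip, and
\[
  \abs{\psi(z_1) - \psi(z_2)} = \left\lvert \int_{[z_2,z_1]} \psi'(w)\,dw \right\rvert \le M\abs{z_1 - z_2},
\]
which is the desired Lipschitz estimate with constant $M$.

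I expect the main obstacle to be the uniform decay estimate at infinity. The lower bound on the denominator degenerates near $x=0$, where $\sinh x \cos y = 0$, so one must argue carefully that this estimate is only invoked in the regime of large $\abs{x}$ while the central region is controlled purely by compactness. Keeping the decay uniform in $y$ across the entire strip, rather than along a single fixed horizontal line, is the delicate point, and it is precisely the constraint $d < \pi/2$ (through $\cos d > 0$) that makes this uniformity possible.
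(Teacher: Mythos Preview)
Your argument is correct, but it follows a genuinely different line from the paper's. The paper does not compute or estimate $\psi'$ directly at all; instead it \emph{assumes} as a known fact that $\psi$ itself is bounded on every strip $\mathcal{D}_h$ with $h<\pi/2$, then picks $h$ with $d<h<\pi/2$, encloses each $z\in\mathcal{D}_d$ in a disk of radius $h-d$ contained in $\mathcal{D}_h$, and applies Cauchy's derivative estimate to obtain $\abs{\psi'(z)}\le M_h/(h-d)$ uniformly. The Lipschitz conclusion then follows exactly as in your final step.

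The trade-off is this: the paper's proof is shorter and deliberately transformation-agnostic---it works for any holomorphic change of variables that is bounded on slightly wider strips, which is the point the authors emphasize. Your proof, by contrast, is self-contained: you actually locate the poles and establish the double-exponential decay of $\abs{\psi'}$ at infinity, so you never have to import the boundedness of $\psi$ as an unproved input. Your route is essentially the ``algebraic'' argument the paper attributes to Tanaka et al.\ and chose to bypass; it gives more quantitative information (explicit decay, explicit pole locations) at the cost of being specific to the tanh--sinh map.
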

A proof of this algebraically is given in \cite{Tanaka2008}, but we provide a short proof based on Cauchy's derivative estimate. We present this proof because it only relies on the boundedness of $\psi$ on $\mathcal{D}_d$ and thus could easily be extended to other transformations.
\begin{proof}
	First, we fix $d$ arbitrarily such that $0 < d < \frac{\pi}{2}$. Then, given that $\psi$ is bounded on $\mathcal{D}_d$ for any $d < \frac{\pi}{2}$, there exists an $M$ such that $\abs{\psi(x)} \leq M$ on $\mathcal{D}_d$.
	
	Cauchy's derivative estimate, a direct consequence of Cauchy's integral formula, states that if $\abs{f(x)} \leq M$ on $C$, a circle of radius r centered at a, then
	\begin{equation}
		\abs{f^{(n)}(a)} \leq \frac{n!M}{r^n}
	\end{equation}
	
	Choosing an $h$ such that $d < h < \frac{\pi}{2}$ and considering any $z \in \mathcal{D}_d$, we examine a circle $C_z$ of radius $h-d$ centered at the point $z$. For any $y \in C_z$, we can apply the identity
	\begin{equation}
		\Im(y) = \Im(y-z) + \Im(z)
	\end{equation}
	and because $\abs{\Im(y-z)} \leq \abs{y-z} \leq h-d$ and $z \in D_d$, we have that
	\begin{align}
		\abs{\Im(y)} &\leq \abs{\Im(y-z)} + \abs{\Im(z)} \\
		\abs{\Im(y)} &< h
	\end{align}
	and therefore $C_z \subset \mathcal{D}_h$. By our original assumption we know that there exists some $M_h$ such that $\abs{\psi(x)} \leq M_h$ on $\mathcal{D}_h$. We have now shown that $\psi$ satisfies the conditions for Cauchy's derivative estimate on the circle $C_z$ for any $z \in \mathcal{D}_d$. Therefore, we have
	\begin{equation}
		\abs{\psi'(z)} \leq \frac{M_h}{h-d}
		\label{eq:cauchybound}
	\end{equation}
	in $C_z$. Furthermore, we observe that the right hand side of equation (\ref{eq:cauchybound}) is independent of $z$. This proves that $\psi$ has bounded derivative on $\mathcal{D}_d$ and as a consequence $\psi$ is also Lipschitz on $\mathcal{D}_d$.
\end{proof}

\begin{lemma}
	$\psi(\mathcal{D}_d)$ is bounded. \label{lem:tsregionbounded}
\end{lemma}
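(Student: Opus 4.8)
The plan is to reduce the boundedness of $\psi(\mathcal{D}_d)$ to showing that the hyperbolic tangent never approaches its poles on the image of $\mathcal{D}_d$ under the inner map $z \mapsto \frac{\pi}{2}\sinh(z)$. Writing $\psi(z) = \tanh(w)$ with $w = u + iv = \frac{\pi}{2}\sinh(z)$, I would first record the exact modulus
\begin{equation}
  \abs{\tanh(w)}^2 = \frac{\sinh^2 u + \sin^2 v}{\sinh^2 u + \cos^2 v},
\end{equation}
which follows from the standard identities $\abs{\sinh w}^2 = \sinh^2 u + \sin^2 v$ and $\abs{\cosh w}^2 = \sinh^2 u + \cos^2 v$. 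The poles of $\tanh$ correspond exactly to the vanishing of the denominator, i.e. to $u = 0$ together with $v \in \frac{\pi}{2} + \pi\mathbb{Z}$, so the entire problem becomes a uniform positive lower bound on $\sinh^2 u + \cos^2 v$.

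Next I would express $u$ and $v$ through $z = x + iy$ with $\abs{y} < d$, giving $u = \frac{\pi}{2}\sinh(x)\cos(y)$ and $v = \frac{\pi}{2}\cosh(x)\sin(y)$, and argue by a two-regime split on $\abs{x}$. The decisive use of the hypothesis $d < \frac{\pi}{2}$ is that $\sin(d) < 1$, so one may fix a threshold $x^\ast > 0$ with $\cosh(x^\ast)\sin(d) < 1$. For $\abs{x} \le x^\ast$ one then has $\abs{v} \le \frac{\pi}{2}\cosh(x^\ast)\sin(d) < \frac{\pi}{2}$, which keeps $\cos(v)$ bounded below by a positive constant; for $\abs{x} \ge x^\ast$ one has $\abs{u} \ge \frac{\pi}{2}\sinh(x^\ast)\cos(d) > 0$, which keeps $\sinh^2 u$ bounded below. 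In either case the denominator $\sinh^2 u + \cos^2 v$ is at least some $\delta = \delta(d) > 0$.

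Finally I would combine these into the single estimate
\begin{equation}
  \abs{\tanh(w)}^2 = 1 + \frac{\sin^2 v - \cos^2 v}{\sinh^2 u + \cos^2 v} \le 1 + \frac{1}{\delta},
\end{equation}
using $\sin^2 v - \cos^2 v \le 1$ in the numerator and the lower bound just obtained in the denominator. This yields $\abs{\psi(z)} \le \sqrt{1 + 1/\delta}$ uniformly on $\mathcal{D}_d$, which is precisely the assertion.

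I expect the main obstacle to be the uniform lower bound on the denominator in the second paragraph. The naive attempt to bound $\cos(v)$ away from zero fails as $\abs{x}$ grows, since then $\abs{v} \to \infty$ and $v$ sweeps through the pole heights $\frac{\pi}{2} + \pi\mathbb{Z}$; one genuinely needs the compensating growth of $\sinh^2 u$, and it is the interplay $\cosh(x^\ast)\sin(d) < 1$ — the only place where $d < \frac{\pi}{2}$ enters — that lets the two regimes cover all of $\mathcal{D}_d$ without a gap. An alternative, perhaps cleaner, route would replace the explicit threshold by a compactness argument on the strip $[-x^\ast, x^\ast] \times [-d, d]$ together with the asymptotics $\tanh(w) \to \pm 1$ as $\Re(w) \to \pm\infty$, but the elementary estimate above avoids invoking the asymptotics and produces an explicit constant.
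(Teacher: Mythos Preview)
Your proof is correct and takes a genuinely different route from the paper. The paper's argument leans on the preceding lemma, that $\psi$ is Lipschitz on $\mathcal{D}_d$: for any $z = \psi(x)$ with $x \in \mathcal{D}_d$ one writes $\abs{z - \psi(\Re x)} \le L\,\abs{\Im x} \le Ld$, and since $\psi(\Re x) \in (-1,1)$ this places every image point within distance $Ld$ of the interval $(-1,1)$. Your approach bypasses the Lipschitz lemma entirely, computing $\abs{\tanh w}$ explicitly and bounding the denominator $\sinh^2 u + \cos^2 v$ away from zero via the two-regime split on $\abs{x}$. The paper's route is shorter once the Lipschitz property is in hand and transparently generalizes to any bounded Lipschitz change of variable; however, the paper's proof of that Lipschitz lemma \emph{assumes} $\psi$ is bounded on $\mathcal{D}_d$ as its starting hypothesis, so the chain of implications is effectively circular unless boundedness is imported from elsewhere. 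Your direct estimate is self-contained, yields an explicit constant depending only on $d$, and in fact supplies precisely the boundedness input that the paper's Lipschitz argument takes for granted.
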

\begin{proof}
	Consider any point $z \in \psi(\mathcal{D}_d)$. By construction of this set, there exists some $x \in \mathcal{D}_d$ where $\psi(x) = z$. Furthermore, because $\psi$ is Lipschitz, we have that $\abs{\psi(x) - \psi(\Re(x))} = \abs{z-\psi(\Re(x))} \leq L \abs{x-\Re(x)} = L\abs{\Im(x)} \leq Ld$. Thus,
	\begin{equation}
		\abs{z-\psi(\Re(x))} \leq Ld
	\end{equation}
	and $\psi(\Re(x)) \in (-1,1)$, guaranteeing that any z arbitrarily chosen in $\psi(\mathcal{D}_d)$ is within a finite distance of $(-1,1)$ independent of the choice of $z$. Therefore, $\psi(\mathcal{D}_d)$ is bounded.
\end{proof}

The following theorem is our general result 

\begin{theorem}
	Consider $d \in (0,\frac{\pi}{2})$ and a function $f$ of the form
	\begin{equation}
		f(x) = \frac{g(x)}{h(x)}
	\end{equation}
	where $f,g$ are analytic, the function $h$ satisfies 
	\begin{equation}
		\abs{h(x)} \geq C_1 \abs{\sqrt{1-x^2}}
		\label{eq:hbounded}
	\end{equation}
	in $\psi(\mathcal{D}_d)$, and the function $g$ is continuous on $[-1,1]$ with a finite number of singularities in the complex plane.
	
	Then $f$ satisfies the conditions for Theorem~\eqref{thm:tanhsinhspace}.
\end{theorem}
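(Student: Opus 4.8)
The plan is to verify the two hypotheses of Theorem~\ref{thm:tanhsinhspace} for $f=g/h$ by choosing the strip width $d$ small enough to isolate the admissible endpoint singularities at $z=\pm1$ (which are absorbed into $h$) from every singularity of $g$. Once $d$ is so chosen, the numerator $g$ will be bounded on the relevant compact region, while the denominator bound \eqref{eq:hbounded} supplies exactly the factor $\abs{\sqrt{1-z^2}}$ needed to match \eqref{eq:TSbound} with $\beta=1$. Throughout I will use that shrinking $d$ only shrinks $\psi(\mathcal{D}_d)$, so the hypothesis $\abs{h(z)}\ge C_1\abs{\sqrt{1-z^2}}$ continues to hold on the smaller strip; hence I may freely decrease $d$ without losing control of $h$.

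The key step is to exclude the singularities of $g$ from the closure of the transformed strip, mirroring the argument given above for $\mathcal{I}_1$. Let $t_1,\dots,t_m$ be the finitely many singularities of $g$; since $g$ is continuous on $[-1,1]$, none of them lies in $[-1,1]$. Set $S=\{z\in\mathbb{C}\mid\psi(z)\in\{t_1,\dots,t_m\}\}$ and claim that $\inf_{z\in S}\abs{\Im(z)}=:\delta>0$. I would prove this by contradiction exactly as in the $\mathcal{I}_1$ case: a sequence $x_j\in S$ with $\abs{\Im(x_j)}\to0$ would, by the Lipschitz property of $\psi$, force $\abs{\psi(x_j)-\psi(\Re(x_j))}\to0$; since the values $\psi(x_j)$ lie in the finite set $\{t_i\}$, some $t_i$ would be a limit of points $\psi(\Re(x_j))\in(-1,1)$, placing $t_i\in[-1,1]$, a contradiction. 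I then set $d'=\min(d,\delta/2)$.

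To finish, I would argue that no $t_i$ lies in the compact set $\overline{\psi(\mathcal{D}_{d'})}$, which is compact by Lemma~\ref{lem:tsregionbounded}. For finite boundary limit points this follows from $\overline{\mathcal{D}_{d'}}\cap S=\emptyset$ together with continuity of $\psi$; the only genuinely new boundary points produced by the closure arise as $\Re(z)\to\pm\infty$ inside the strip, where $\psi(z)\to\pm1$, and $g$ is continuous (hence finite) at $\pm1$ by hypothesis. Consequently $g$ is continuous on the compact set $\overline{\psi(\mathcal{D}_{d'})}$ and therefore bounded, say $\abs{g(z)}\le M$; moreover $f$ is analytic on $\psi(\mathcal{D}_{d'})$, since $g$ has no singularities there and $h$ does not vanish (its modulus is bounded below by $C_1\abs{\sqrt{1-z^2}}$, which is positive off $z=\pm1$). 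Combining with \eqref{eq:hbounded} gives
\begin{equation}
  \abs{f(z)}=\frac{\abs{g(z)}}{\abs{h(z)}}\le\frac{M}{C_1}\,\abs{\frac{1}{\sqrt{1-z^2}}}=\frac{M}{C_1}\,\abs{(1-z^2)^{\beta/2-1}}
\end{equation}
with $\beta=1$, which is precisely the bound \eqref{eq:TSbound}.

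The main obstacle is the closure argument of the last paragraph. The bound in Theorem~\ref{thm:tanhsinhspace} is stated on $\psi(\mathcal{D}_{d'})$, but to bound $g$ by a single constant I need compactness, hence the closure, and a continuous function on an open bounded set need not be bounded. The care lies in checking that passing to the closure introduces no new bad points: concretely, that the only boundary points reached as $\Re(z)\to\pm\infty$ are $z=\pm1$, where $g$ is harmless, while every finite boundary limit is controlled by $\overline{\mathcal{D}_{d'}}\cap S=\emptyset$. The Lipschitz continuity and boundedness of $\psi$ established above are exactly what make both halves of this verification go through.
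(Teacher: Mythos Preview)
Your proposal is correct and follows essentially the same route as the paper: define $S$ as the $\psi$-preimage of the singularities of $g$, use the Lipschitz property of $\psi$ together with finiteness of the singular set to bound $S$ away from the real axis, shrink $d$ accordingly, and then invoke compactness of $\overline{\psi(\mathcal{D}_{d'})}$ (Lemma~\ref{lem:tsregionbounded}) to bound $g$ and combine with \eqref{eq:hbounded}. If anything, your version is slightly more explicit than the paper's in tracking $d'=\min(d,\delta/2)$ so that the hypothesis on $h$ is preserved, in identifying $\beta=1$, and in separately handling the boundary points of $\overline{\psi(\mathcal{D}_{d'})}$ that arise as $\Re(z)\to\pm\infty$ (where $\psi\to\pm1$ and $g$ is continuous); the paper passes over these points more briskly.
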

\begin{proof}
	We define S to be the points in $\mathbb{C}$ which map to singularities of $g$ under $\psi$ i.e.,
	\begin{equation}
		\phantom{.}
		S = \{ z \in \mathbb{C} \mid g \text{ is singular at } \psi(z) \}
		.
	\end{equation}

We contend that the points in $S$ are bounded away from the real axis. To show this, assume that they are not. Then, there exists a sequence of points $(z_k)_{k=1}^{\infty}$ where $z_k \in S$ and $\abs{z_k - \Re(z_k)} \to 0$. Because $\psi$ is Lipschitz, we have that $\abs{\psi(z_k) - \psi(\Re(z_k))} \leq L \abs{z_k - \Re(z_k)}$ for some constant $L$. From this inequality we can conclude that $\abs{\psi(z_k) - \psi(\Re(z_k))} \to 0$ as well which is an infinite sequence of singular points of $g$ getting arbitrarily close to points in the set $(0,1)$. However, our original assumption that $g$ is continuous on $[-1,1]$ guarantees that $\psi(z_k)$ is not a limit point of $(-1,1)$, allowing us to extract an infinite subsequence of unique points $(\psi(z_{n_k}))_{k=1}^{\infty}$ getting monotonically closer to the set $(-1,1)$. The fact that this subsequence contains unique points contradicts our original assumption that $g$ has finite singularities in the complex plane.

Therefore, the points in $S$ are bounded away from the real axis i.e.,
\begin{equation}
	\phantom{.}
	\inf_{x \in S}{|\Im(x)|} > 0
	.
\end{equation}
If we choose $d$ to be $d:=\frac{1}{2}\inf_{x \in S}{\abs{\Im(x)}}$, then $\mathcal{D}_d \cap S = \emptyset$ and $\overline{\mathcal{D}_d} \cap S = \emptyset$. Furthermore, $\overline{\psi(\mathcal{D}_d)} \cap \psi(S) = \emptyset$ and due to Lemma~\eqref{lem:tsregionbounded}, $g$ must be bounded on the compact set $\overline{\psi(\mathcal{D}_d)}$ by some constant $C_2$. This, combined with our original assumption in equation (\ref{eq:hbounded}) that $h(x)$ is bounded below results in
\begin{equation}
	\abs{f(x)} = \frac{\abs{g(x)}}{\abs{h(x)}} \leq \frac{C_2}{C_1} \abs{\frac{1}{\sqrt{1-x^2}}}
\end{equation}
as desired.
\end{proof}


\section{Implementation}

Our ellipsoidal implementation follows~\cite{BardhanKnepley2012a} closely, but reimplements the functionality in C
whereas the original used Python and MATLAB. In addition, we have used
MPFR package~\cite{FousseHanrotLefevrePelissierZimmermann2007} for arbitrary precision calculations. Our work has been
integrated into the PETSc package~\cite{petsc-user-ref,petsc-web-page} from Argonne National Laboratory. The user can
select either the fixed or arbitrary precision versions for a given integral. Dense linear algebra for the fixed precision version, such as the
eigensolves used for harmonic initialization, were performed using LAPACK~\cite{Anderson1999}. In the arbitrary precision version, linear algebra routines were implemented in MPFR and the eigenvalue problem was solved using a power iteration with reorthogonalization.

Coordinate transforms between the ellipsoidal and Cartesian systems were originally presented in~\cite{Romain2001} and
further detailed in~\cite{BardhanKnepley2012a}, which also presented a method for handling sign ambiguities (our
Eq.~\eqref{eq:signambig}). We also follow~\cite{BardhanKnepley2011} for the spherical harmonic implementation.

When initializing an ellipsoidal system and associated harmonics, abscissas ($x_{kh}$) and weights ($w_{kh}$) are
precomputed for $k=-N,\dots,N$ where $N$ is chosen so that $w_{Nh} \leq 10^{-2p}$ where $p$ is the desired digits of
precision, as recommended by Bailey~\cite{Bailey2005}. The step size $h$ used for initialization is $h_j = 2^{-j}$
where $j \in \mathbb{N}$ is provided by the user. Due to nested definition of tanh-sinh abscissas, this precomputation
includes all abscissas computed with step size $h_i$ for $0\leq i<j$ up to the same $10^{-2p}$ tolerance for the
weights. 

The computational complexity associated with this initialization is $O(N)$ which contrasts Gaussian quadrature's
$O(N^2)$ initialization cost using common approaches, although we do note the recent development of an $O(N)$
initialization scheme for Gaussian quadrature~\cite{GlaserLiuRokhlin2007} albeit with larger
constant. After initialization, the computational complexity associated with evaluating and accumulating $N$
abscissa-weight pairs is $O(N)$ and the error decreases at a rate of $O\left(\exp(-c_1 \frac{N}{\log N})\right)$.
After the abscissa/weight initialization, the inner trapezoid rule computation in tanh-sinh is implemented adaptively
with the step size halved at each level. Convergence was assessed using the estimates provided by
Bailey~\cite{Bailey2005} which were uniformly close to the actual errors in tests.

\section{Results}

\subsection{Implementation Accuracy}

In order to verify our computation of the normalization constants, we will compare to both explicit formulae (for degree
up to two) and to a high precision MPFR implementation executed with approximately 1000-digit arithmetic. As implemented,
normalization constants of an arbitrary order can be independently computed with accuracy only limited by the precision
of interior harmonic evaluation. As demonstrated in Fig.~\ref{fig:normConv}, the double precision calculation is
accurate for low-order harmonics, but the accuracy degrades for high-order harmonics.

\begin{figure}[h]
	\makebox[\textwidth][c]{
		\includegraphics[]{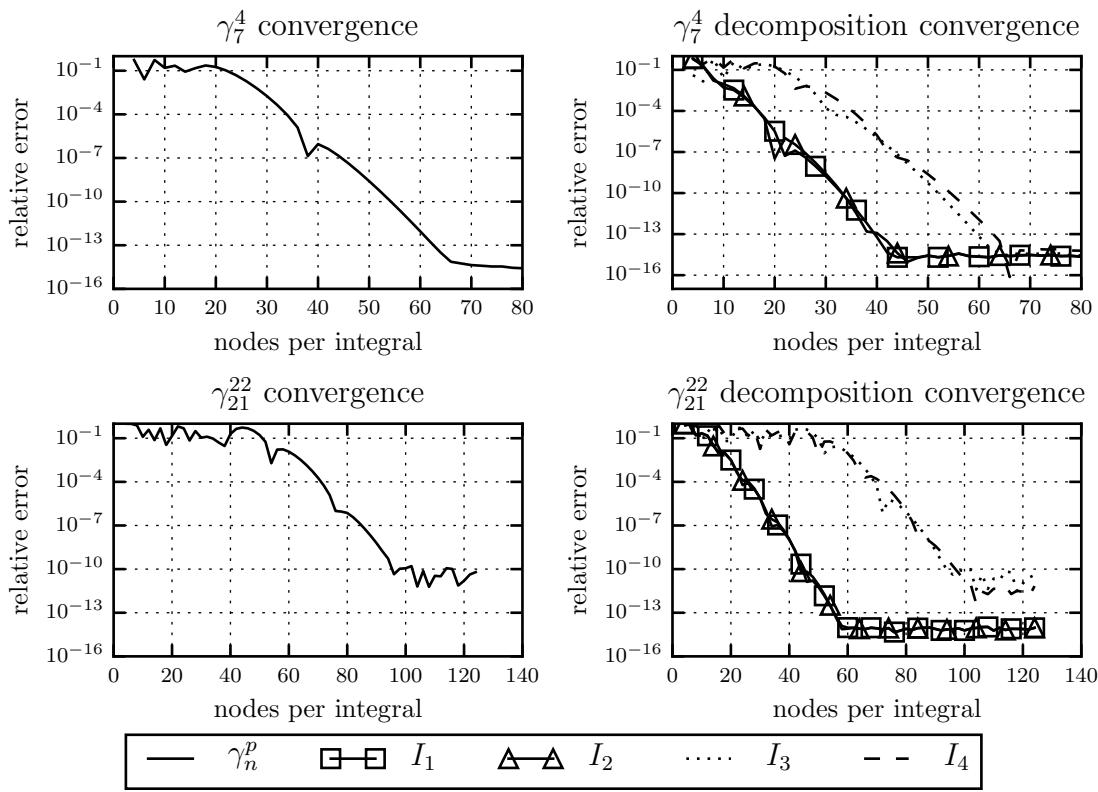}
	}
	\caption{Convergence of normalization constants for ellipsoid with a=3, b=2, c=1.}
	\label{fig:normConv}
\end{figure}

In order to examine the efficiency of the tanh-sinh transformation, we compare it to two popular transformations: error
function and tanh. Because quadrature nodes and weights can be precomputed for many applications, we choose to compare
accuracy against the number of function evaluations. In Fig.~\ref{fig:pointpreccomp}, we see that tanh-sinh is not only
more efficient, but has a higher rate of convergence.

\begin{figure}[h]
  \begin{center}
    \includegraphics{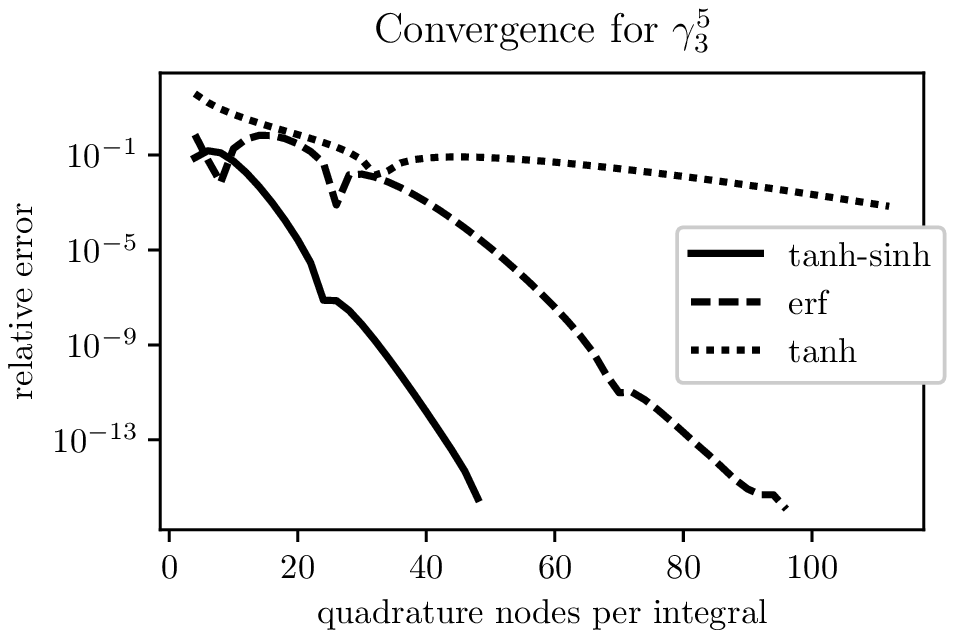}
  \end{center}
  \caption{Comparison of three integral transforms for the computation of a fifth order normalization constant. \label{fig:pointpreccomp}}
\end{figure}

\subsection{Ellipsoidal Potential Problems}

The so-called Born ion, a single charge at the center of a dielectric sphere, provides a verification of our overall
energy calculation. We construct a nearly spherical cavity and look at the limit as its becomes a sphere. In
Fig.~\ref{fig:sphConv}, we consider a single unit point charge at the origin and cavity semi-axes of lengths
$a=1+\Delta$, $b=1+\Delta/5$, $c=1+\Delta/10$ as $\Delta \to 0$. We see that the free energy converges linearly to that
of the Born ion.
\begin{figure}[h]
  \begin{center}
    \includegraphics[width=.7\textwidth]{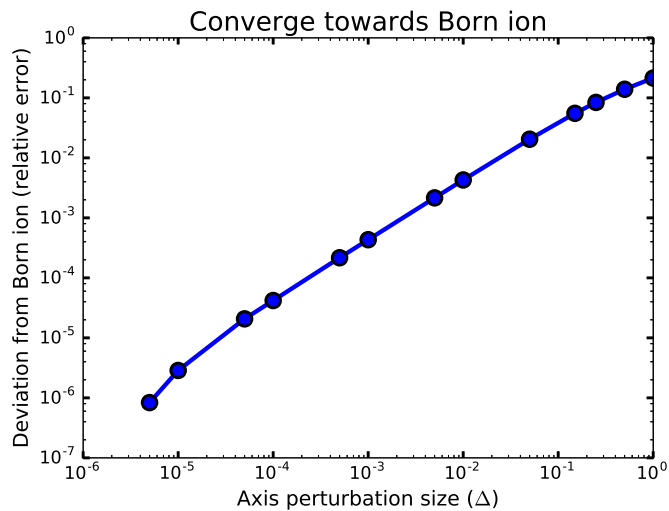}
  \end{center}
  \caption{Error in free energy against the Born ion as cavity becomes spherical. \label{fig:sphConv}}
\end{figure}

We would also like to examine the convergence of our expansion for the simple problem of an ellipsoidal cavity with semi-axis
lengths $a=3$, $b=2$, $c=1$ and five randomly placed interior charges. As shown in Fig.~\ref{fig:cavityConv}, the
solvation free energy, ($\Delta G^{\text{el}}_{\text{solv}}$), converges exponentially with respect to the maximum order
of expansion, as expected. In the work-precision diagram, we see that convergence is sublinear with respect to flops due
to the nonlinear relationship between work required and order of expansion. However, it is worth mentioning that most
calculations performed to generate this solution can be trivially parallelized.

\begin{figure}[h]
  \begin{center}
    \includegraphics{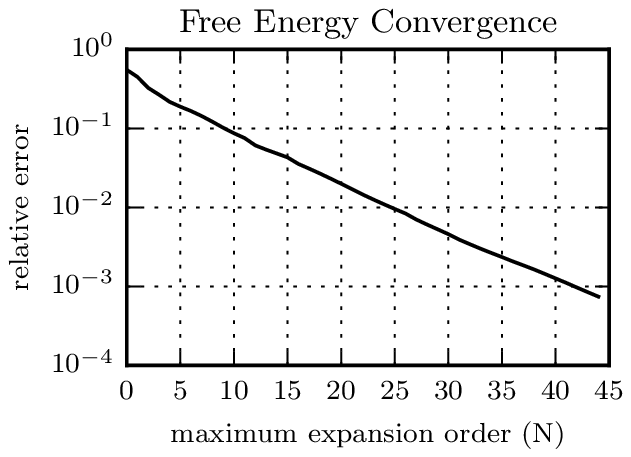}
    \includegraphics{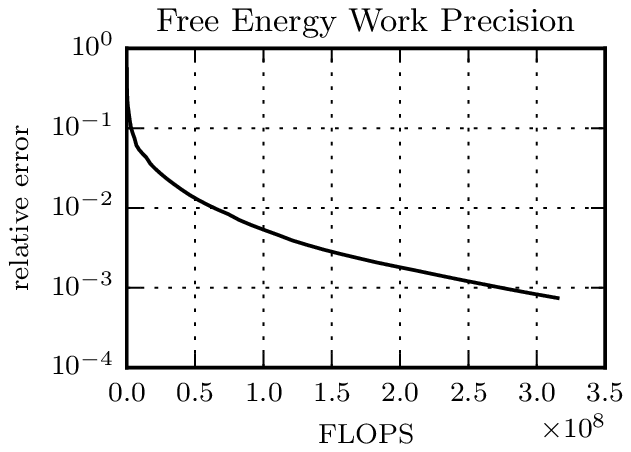}
  \end{center}
  \caption{Convergence and work-precision diagrams for the free energy of an ellipsoidal cavity with 5 randomly placed charges. \label{fig:cavityConv}}
\end{figure}

For approximately ellipsoidal charge distributions, the ellipsoidal expansion can have a substantially larger region of
convergence. For the same ellipse and charge configuration as above, we show the convergence of spherical and
ellipsoidal expansions in Fig.~\ref{fig:sphElipComp}. In order to compare solutions, we only examine the Coulomb portion
of each potential. In the left figure, we see that the ellipsoidal expansion convergences for points outside the
containing ellipse, but inside the Brillouin Sphere, which is the smallest sphere containing the charges.

\begin{figure}[h]
  \begin{center}
    \includegraphics{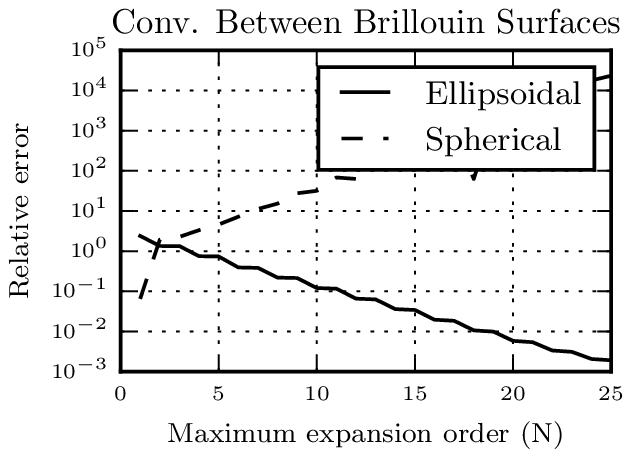} 
    \includegraphics{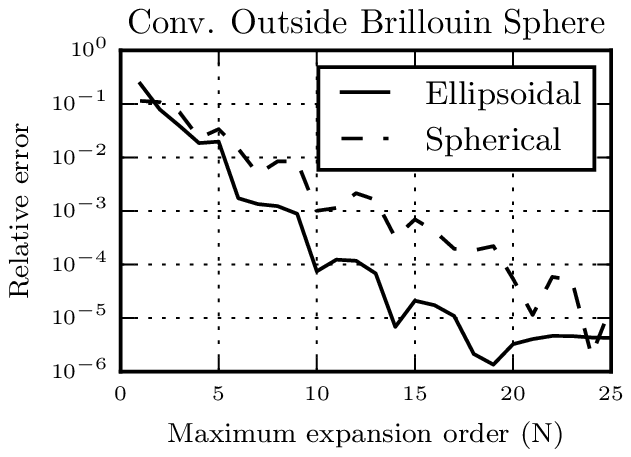}
  \end{center}
  \caption{Convergence of the Coulomb potential for spherical and ellipsoidal expansions for the potential due to an
    ellipsoidal distribution of charge, at a point inside the sphere containing the charge but outside the ellipsoid
    (left) and a point outside both the sphere and ellipsoid (right). \label{fig:sphElipComp}}
\end{figure}

\section{Discussion}

We have provided simple implementation capable of accurately calculating ellipsoidal harmonic expansions at least to
order 50. We make use of tanh-sinh quadrature, taking advantage of adaptivity in the algorithm, which may be easily
coded by the user, but we also make available a free C implementation in the PETSc
source~\cite{petsc-web-page,petsc-user-ref}. We have demonstrated a superior convergence rate from ellipsoidal
harmonics, and more robust convergence when sources are better enclosed by an ellipsoid. We hope this enables spectral
methods based on ellipsoidal harmonics as a viable alternative to the more common spherical harmonic expansion.

More broadly, it seems that our approach to optimality of the quadrature can be applied to other classes of singular
integrals, for examples those that make up the Galerkin boundary element method. Currently, analytic integrals or
quadratures specialized to the kernel or geometry are often used, but this seems to provide an automatable method for
attacking the problem, which might be complementary to the QBX approach~\cite{KlocknerBarnettGreengardONeil2013}. We
also believe that the spectral method for the PCM using ellipsoidal harmonics could be used as a valuable supplement to
operator approximation methods for the solvation problem~\cite{BardhanKnepleyAnitescu2008,BardhanKnepley2011}.

\bibliographystyle{siam}
\bibliography{quad-paper,petsc,petscapp}

\end{document}